\newtheorem{definition}{Definition}[section]
\newtheorem{theorem}[definition]{Theorem}
\newtheorem{lemma}[definition]{Lemma}
\newtheorem{corollary}[definition]{Corollary}
\newtheorem{example}[definition]{Example}
\newtheorem{problem}[definition]{Problem}
\newtheorem{note}[definition]{Note}
\newtheorem{assumption}[definition]{Assumption}
\newtheorem{proposition}[definition]{Proposition}
\begin{document} 

\title{\bf The Norton algebra of a $Q$-polynomial 
distance-regular graph
}
\author{
Paul Terwilliger}
\date{}

\maketitle
\begin{abstract} We consider the Norton algebra associated with a $Q$-polynomial primitive idempotent of the adjacency matrix for a distance-regular graph.
We obtain a formula for the Norton algebra product that we find attractive.

\bigskip

\noindent
{\bf Keywords}. Bose-Mesner algebra; Krein parameter; $Q$-polynomial; Leonard system.
\hfil\break
\noindent {\bf 2020 Mathematics Subject Classification}.
Primary: 05C50;
Secondary: 05E30.
 \end{abstract}

\section{Introduction} There is a family of highly regular graphs said to be 
distance-regular \cite{banIto, BCN, dkt}. 
Examples include 
the Johnson graphs \cite[Section~9.1]{BCN},
the Hamming graphs \cite[Section 9.2]{BCN}, 
the Grassmann graphs \cite[Section~9.3]{BCN},
and the dual polar graphs \cite[Section~9.4]{BCN}. The graphs in these four families are particularly
attractive for several reasons: (i) they have a $Q$-polynomial structure, according to which 
their Krein parameters vanish in a certain attractive pattern; (ii) these graphs come with
 a ranked partially ordered set that can be used to analyze the graph. 
 \medskip
 
 \noindent In the analysis of any distance-regular graph $\Gamma$, one often considers 
 the eigenspaces for the adjacency matrix $A$ of $\Gamma$.  By \cite{norton}  these eigenspaces
 possess an algebra structure, called the Norton algebra, that is commutative but not necessarily associative.
 The Norton product  $\star$ is described as follows. 
Let $X$ denote the vertex set of $\Gamma$.
The  rows and columns of $A$ are indexed by $X$. The matrix $A$ acts on
 a vector space $V$ over $\mathbb R$, consisting of column vectors whose coordinates are indexed by $X$.
 For $x \in X$ let $\hat x$ denote the vector in $V$ that has $x$-coordinate 1 and all other coordinates zero.
 So $\lbrace \hat x | x \in X\rbrace$ is a basis for $V$.  The entry-wise product $\circ: V \times V \to V$ satisfies
 $\hat x \circ \hat y = \delta_{x,y} \hat x$ for all $x,y \in X$. The matrix $A$ is diagonalizable since it is symmetric, 
 so $V$ is a direct sum of the eigenspaces of $A$.
 For an eigenspace of $A$, the corresponding
 primitive idempotent $E$  acts as the identity on the eigenspace, and zero on the other eigenspaces of $A$. Thus $E$ is the projection
 from $V$ onto the eigenspace. The  eigenspace is $EV$.
For $u, v \in EV$ we have
 $u \star v = E (u \circ v) $.
 \medskip
 
 \noindent
  Earlier we mentioned $Q$-polynomial structures.  For a given $Q$-polynomial structure on $\Gamma$
  the adjacency matrix $A$ has 
  a distinguished primitive idempotent, said to be $Q$-polynomial. 
  Recently, several authors have considered  the Norton algebra $EV$ for a $Q$-polynomial primitive idempotent $E$ of $A$.
  This was done by 
 C. Maldonado and D. Penazzi in \cite{Norton},  under the assumption that $\Gamma$ is 
 a Johnson graph, Hamming graph, or  Grassmann graph.
 It was done by 
 F. Levstein, C. Maldonado, and D. Penazzi in \cite{levNorton}, 
under the assumption that $\Gamma$ is a dual polar graph.  In both articles the authors compute
$\check u \star  \check v $  for all $u,v \in L_1$, where
$\lbrace {\check u} \vert u \in L_1\rbrace$ is a certain spanning set for $EV$ indexed by the set $L_1$ of rank 1 elements in the associated poset.
The results of \cite{Norton, levNorton} are used by J. Huang
 in  \cite{jia} to investigate the extent to which the Norton product is nonassociative.
 \medskip

\noindent In the present paper we consider the Norton algebra $EV$, where $E$ is a  $Q$-polynomial primitive idempotent of the adjacency matrix $A$ for any distance-regular graph $\Gamma$ with diameter $d\geq 2$.
For all vertices $x,y$ of $\Gamma$ we give an explicit formula for the Norton product $E \hat x \star E\hat y$, in terms of a few eigenvalues $\theta_0, \theta_1, \theta_2$
of $A$
and a sequence of scalars $\lbrace \theta^*_i \rbrace_{i=0}^d$  called the dual eigenvalues of $\Gamma$ associated with $E$.
We give two versions of our formula. The first version is more straightforward. To obtain the second version, we use the balanced set condition
\cite{QPchar, newineq}
to make the symmetry $E\hat x \star E \hat y = E \hat y \star E \hat x$ explicit.
 Our main results are Theorems \ref{thm:main} and  \ref{thm:main2}.
 \medskip
 
 \noindent The paper is organized as follows.
  In Section 2 we review some basic concepts concerning distance-regular graphs.
 In Section 3 we recall the Norton algebra and obtain the first version of our Norton product formula.
 In Section 4 we use the balanced set condition to obtain the second version of our Norton product formula.
 In Section 5 we remark how certain equations in Sections 3, 4 can be obtained using the theory of Leonard systems.

\section{Preliminaries} In this section we review some basic concepts concerning distance-regular graphs. For more background
information we refer the reader to
\cite{banIto, BCN, dkt}.
\medskip

\noindent
Let $\mathbb R$ denote the real number field.
Let $X$ denote a nonempty finite set.
Let ${\rm Mat}_X(\mathbb R)$ denote the $\mathbb R$-algebra consisting of the matrices that have rows and columns indexed by $X$ and all entries in $\mathbb R$.
Let $I$ denote the identity matrix in  ${\rm Mat}_X(\mathbb R)$,
and let $J$ denote the matrix in  ${\rm Mat}_X(\mathbb R)$ that has all entries 1.
 Let $V=\mathbb R X$ denote the vector space over $\mathbb R$ consisting
of the column vectors that have coordinates indexed by $X$ and all entries in $\mathbb R$. The algebra
${\rm Mat}_X(\mathbb R)$ acts on $V$ by left multiplication.
For $x \in X$  let $\hat x$ denote the vector in $V$ that has $x$-coordinate 1 and all other coordinates 0.
The vectors $\lbrace \hat x | x \in X\rbrace$ form a basis for $V$. 
Let $\bf 1$ denote the vector in $V$ that has all entries 1. So ${\bf 1} = \sum_{x \in X} \hat x$.
Note that $J \hat x = \bf 1$ for all $x \in X$.
\medskip

\noindent Let $\Gamma=(X,R)$ denote an undirected connected graph, without loops or multiple edges, with vertex set $X$,
edge set $R$, and path-length distance function $\partial$. Recall the diameter $d={\rm max}\lbrace \partial (x,y) \vert x, y \in X\rbrace$.
For $x \in X$ and an integer $i\geq 0$ define $\Gamma_i(x) = \lbrace y \in X \vert \partial(x,y)=i\rbrace$.  We abbreviate
$\Gamma(x) = \Gamma_1(x)$.
For an integer $k\geq 0$, $\Gamma$ is said to be {\it regular with valency $k$} whenever 
$k=\vert \Gamma(x)\vert$ for $x \in X$.
The graph $\Gamma$ is said to be {\it distance-regular} whenever for all integers $h,i,j$ $(0 \leq h,i,j\leq d)$
and all $x,y\in X$ at distance $\partial(x,y)=h$, the scalar
$ p^h_{i,j} = \vert \Gamma_i(x) \cap \Gamma_j(y)\vert$
is independent of $x$ and $y$. The scalars $p^h_{i,j} $ are called the {\it intersection numbers} of $\Gamma$.
For the rest of this paper, we assume that $\Gamma$ is distance-regular with diameter $d\geq 2$. 
By construction $p^h_{i,j}=p^h_{j,i}$ for $0 \leq h,i,j\leq d$.
By the triangle inequality we find that for $0 \leq h,i,j\leq d$,
\begin{enumerate}
\item[\rm (i)] $p^h_{i,j}=0$ if one of $h,i,j$ is greater than the sum of the other two;
\item[\rm (ii)] $p^h_{i,j}\not=0$ if one of $h,i,j$ is equal to the sum of the other two.
\end{enumerate}
We abbreviate
$c_i = p^i_{1,i-1}$ $(1 \leq i \leq d)$,
$a_i = p^i_{1,i}$ $(0 \leq i \leq d)$,
$b_i = p^i_{1,i+1}$ $(0 \leq i \leq d-1)$.
By construction $c_i \not=0$ for $1 \leq i \leq d$ and $b_i \not=0$ for $0 \leq i \leq d-1$.
The graph $\Gamma$ is regular with valency $k=b_0$. Moreover
$k = c_i + a_i + b_i$ for $0 \leq i \leq d$, 
where $c_0=0$ and $b_d=0$.
\medskip

\noindent Next we recall the Bose-Mesner algebra of $\Gamma$.
For $0 \leq i \leq d$ let $A_i$ denote the matrix in ${\rm Mat}(\mathbb R)$  that has
$(x,y)$-entry
\begin{align*}
(A_i)_{x,y} = \begin{cases}
 1, & {\mbox{\rm if $\partial(x,y)=i$}}; \\
0, &{\mbox{\rm if $\partial(x,y)\not=i$}}
\end{cases}
\qquad \qquad (x,y \in X).
\end{align*}
We call $A_i$ the {\it $i^{\rm th}$ distance-matrix} of $\Gamma$.
Note that $A_0=I$. We abbreviate $A=A_1$ and call this the {\it adjacency matrix} of $\Gamma$.
By the construction,
\begin{align*}
A_i A_j = \sum_{h=0}^d p^h_{i,j} A_h \qquad \qquad (0 \leq i,j\leq d).
\end{align*}
By these comments the matrices $\lbrace A_i \rbrace_{i=0}^d$ form a basis for a commutative subalgebra of ${\rm Mat}_X(\mathbb R)$.
This algebra is denoted by $M$ and called the {\it Bose-Mesner algebra} of $\Gamma$. The algebra $M$ is generated by $A$ \cite[p.~190]{banIto}.
\medskip

\noindent Next we recall the primitive idempotents and eigenvalues of $\Gamma$.
By \cite[p.~45]{BCN} the vector space $M$ has a basis $\lbrace E_i \rbrace_{i=0}^d$ such that (i) $E_0=\vert X \vert^{-1} J$;
(ii) $I = \sum_{i=0}^d E_i$;
(iii) $E_i E_j = \delta_{i,j} E_i $ $ (0 \leq i,j\leq d)$.
This basis is unique up to permutation of $\lbrace E_i \rbrace_{i=1}^d $.
We call $\lbrace E_i \rbrace_{i=0}^d$ the {\it primitive idempotents} of $M$ (or $\Gamma$).
The primitive idempotent $E_0$ is called {\it trivial}.
By construction, there exist real numbers $\lbrace \theta_i\rbrace_{i=0}^d$ such that
$A = \sum_{i=0}^d \theta_i E_i$.
The $\lbrace \theta_i\rbrace_{i=0}^d$ are mutually distinct since $A$ generates $M$. 
Using $E_0=\vert X \vert^{-1} J$ we obtain $\theta_0=k$. 
The scalars
$\lbrace \theta_i\rbrace_{i=0}^d$ are called the {\it eigenvalues} of $A$ (or $\Gamma$).
\medskip

\noindent Next we recall the Krein parameters of $\Gamma$.
 For $0 \leq i,j\leq d$ we have $A_i \cdot A_j = \delta_{i,j}A_i$, where $\cdot$ denotes
the entry-wise product for ${\rm Mat}_X(\mathbb R)$. Therefore $M$ is closed under the $\cdot$ product. Consequently there exist $q^h_{i,j} \in \mathbb R$ $(0 \leq h,i,j\leq d)$ such that
\begin{align}
\label{eq:eee}
E_i \cdot E_j = \vert X \vert^{-1} \sum_{h=0}^d q^h_{i,j} E_h \qquad \qquad (0 \leq i,j\leq d).
\end{align}
By construction $q^h_{i,j} = q^h_{j,i} $ for $0 \leq h,i,j\leq d$.
By \cite[Proposition~4.1.5]{BCN} we have $q^h_{i,j}\geq 0$ for $0 \leq h,i,j\leq d$. The scalars $q^h_{i,j}$ are called the {\it Krein parameters} of $\Gamma$.
\medskip

\noindent We describe one significance of the Krein parameters. In this description, we will use the following notation.
For $u \in V$ and $x \in X$ let $u_x$ denote the $x$-coordinate of $u$. So $u = \sum_{x \in X} u_x \hat x$.
 For $u, v \in V$ their entry-wise product $u\circ v$ is the vector in $V$ that has $x$-coordinate $u_x v_x$ for all
$x \in X$. So
$u \circ v = \sum_{x \in X} u_x v_x \hat x$.
For $x, y \in X$ we have
\begin{align}
\label{eq:xycirc}
\hat x \circ \hat y = \begin{cases}
 \hat x, & {\mbox{\rm if $x=y$}}; \\
0, &{\mbox{\rm if $x\not=y$.}}
\end{cases}
\end{align} 
For $v \in V$ we have ${\bf 1} \circ v = v$.  For subspaces $Y, Z $ of $V$ define
$Y \circ Z = {\rm Span} \lbrace y \circ z \vert y \in Y, z \in Z\rbrace$.
 By \cite[Proposition~5.1]{norton} we have
\begin{align}\label{prop:norton}
E_i V \circ E_j V = \sum_{\stackrel{ \scriptstyle 0 \leq h \leq d }{ \scriptstyle q^h_{ij} \not=0}} E_hV
\qquad \qquad (0 \leq i,j\leq d).
\end{align}

\noindent Next we recall the $Q$-polynomial property.
The given ordering $\lbrace E_i \rbrace_{i=1}^d$ of the nontrivial primitive idempotents of $\Gamma$ is said to be {\it $Q$-polynomial} whenever
for $0 \leq h,i,j\leq d$,
\begin{enumerate}
\item[\rm (i)] $q^h_{i,j}=0$ if one of $h,i,j$ is greater than the sum of the other two;
\item[\rm (ii)] $q^h_{i,j}\not=0$ if one of $h,i,j$ is equal to the sum of the other two.
\end{enumerate}
Let $E$ denote a nontrivial primitive idempotent of $\Gamma$. We say that $E$  is {\it  $Q$-polynomial}
whenever there exists a $Q$-polynomial ordering $\lbrace E_i\rbrace_{i=1}^d$ of the nontrivial primitive idempotents of $\Gamma$
such that $E=E_1$. For the rest of this paper we assume that $E$ is $Q$-polynomial.
By construction, there exist real numbers $\lbrace \theta^*_i\rbrace_{i=0}^d$  such that
\begin{align}
\label{eq:Esum}
E = \vert X \vert^{-1} \sum_{i=0}^d \theta^*_i A_i.
\end{align}
By \cite[p.~260]{banIto} the scalars $\lbrace \theta^*_i \rbrace_{i=0}^d$ are mutually distinct.
The scalars $\lbrace \theta^*_i \rbrace_{i=0}^d$ 
are called the {\it dual eigenvalues} of $\Gamma$ associated with $E$. For notational convenience let $\theta^*_{-1}$ and $\theta^*_{d+1}$
denote indeterminates.  Taking the trace of each side of (\ref{eq:Esum}) yields $\theta^*_0 = {\rm rank}(E)$. Also,
multiplying both sides of (\ref{eq:Esum}) by $A$ and evaluating the result yields
\begin{align}
c_i \theta^*_{i-1} +
a_i \theta^*_i +
b_i \theta^*_{i+1} 
= \theta_1 \theta^*_i
\qquad \qquad (0 \leq i \leq d).
\label{eq:recurse}
\end{align}

\section{The Norton algebra}
\noindent We continue to discuss the distance-regular graph $\Gamma$ and its  $Q$-polynomial primitive idempotent $E$.
In this section we turn the vector space $EV$ into a commutative nonassociative algebra called the Norton algebra.

\begin{definition} {\rm (See \cite[Proposition~5.2]{norton}.)}
\label{def:norton} \rm
The {\it Norton algebra} of $\Gamma$ consists of 
the vector space $EV$, together with the product
\begin{align*}
 u \star v = E (u \circ v) \qquad \qquad (u,v \in EV).
 \end{align*}
 \end{definition}
\medskip

\noindent The Norton algebra is commutative, but not necessarily associative.
\medskip

\noindent The vector space $EV$ is spanned by the vectors $\lbrace E\hat x \vert x \in X\rbrace$.
These vectors are nonzero, mutually distinct, and
 linearly dependent \cite[Theorem~1.1]{QPchar}.
As we investigate
the Norton product $\star$ it is natural to consider $E\hat x \star E\hat y$ for all $x,y \in X$.
In the next two lemmas we discuss some extremal cases.

\begin{lemma} \label{lem:warm}
For $x \in X$,
\begin{align}
E \hat x \star E \hat x = |X|^{-1} q^1_{1,1}E \hat x.
\label{eq:ese}
\end{align}
\end{lemma}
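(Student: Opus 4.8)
The plan is to reduce the asserted vector identity to the matrix identity $E\cdot E=|X|^{-1}\sum_{h=0}^{d}q^{h}_{1,1}E_{h}$ coming from (\ref{eq:eee}), by exploiting the fact that entry-wise multiplication of columns matches entry-wise multiplication of matrices. First I would record the elementary observation that for any $M,N\in{\rm Mat}_X(\mathbb R)$ and any $x\in X$, the $x$-column of the entry-wise product $M\cdot N$ equals the entry-wise product $\circ$ of the $x$-columns of $M$ and $N$; in symbols $(M\cdot N)\hat x=(M\hat x)\circ(N\hat x)$. This is immediate from $(M\cdot N)_{y,x}=M_{y,x}N_{y,x}=(M\hat x)_y(N\hat x)_y$ for all $y\in X$. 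Note also that $E\hat x$ is precisely the $x$-column of the matrix $E$.

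Applying this observation with $M=N=E$ gives $E\hat x\circ E\hat x=(E\cdot E)\hat x$. Since $E=E_1$, equation (\ref{eq:eee}) with $i=j=1$ yields $E\cdot E=|X|^{-1}\sum_{h=0}^{d}q^{h}_{1,1}E_h$, hence $E\hat x\circ E\hat x=|X|^{-1}\sum_{h=0}^{d}q^{h}_{1,1}E_h\hat x$. Now I would apply $E$ to both sides. Using the primitive idempotent relations $EE_h=E_1E_h=\delta_{1,h}E_1=\delta_{1,h}E$, the sum collapses to the single term $h=1$, and we obtain $E\hat x\star E\hat x=E(E\hat x\circ E\hat x)=|X|^{-1}q^{1}_{1,1}E\hat x$, as claimed.

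There is essentially no obstacle here: the only step requiring a moment's care is the passage between the entry-wise product of vectors and the entry-wise product of matrices, after which the statement falls out of the defining relations (\ref{eq:eee}) for the Krein parameters together with $E_iE_j=\delta_{i,j}E_i$. One could instead argue coordinate-wise, using $(E\hat x)_y=|X|^{-1}\theta^{*}_{\partial(x,y)}$ from (\ref{eq:Esum}) so that $E\hat x\circ E\hat x$ has $y$-coordinate $|X|^{-2}(\theta^{*}_{\partial(x,y)})^{2}$; but recognizing this expression as $|X|^{-1}\sum_{h}q^{h}_{1,1}(E_h\hat x)_y$ is exactly the matrix identity above, so the column-versus-matrix route is the natural one.
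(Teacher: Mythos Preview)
Your proof is correct and follows essentially the same route as the paper: both start from the Krein relation $E\cdot E=|X|^{-1}\sum_h q^h_{1,1}E_h$, multiply by $E$ to collapse the sum, and extract column $x$ via the observation $(M\cdot N)\hat x=(M\hat x)\circ(N\hat x)$. The paper applies $E$ first and then takes the column, whereas you take the column first and then apply $E$, but the argument is the same.
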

\begin{proof} By (\ref{eq:eee}) we have $E \cdot E = \vert X \vert^{-1} \sum_{h=0}^d q^h_{1,1} E_h$.  For this equation, multiply each side by $E$ to obtain
$E (E \cdot E) = \vert X \vert^{-1} q^1_{1,1} E$. For this equation, compare column $x$  of each side to obtain (\ref{eq:ese}).
\end{proof}

\begin{lemma} \label{lem:warmup1}
The following are equivalent:
\begin{enumerate}
\item[\rm (i)] $E\hat x \star E \hat y =0$ for all $x,y\in X$;
\item[\rm (ii)] $u \star v = 0$ for all $u,v \in EV$;
\item[\rm (iii)] The Krein parameter $q^1_{1,1} =0$.
\end{enumerate}
\end{lemma}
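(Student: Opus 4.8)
The plan is to prove the three statements equivalent by establishing the cycle of implications (ii) $\Rightarrow$ (i) $\Rightarrow$ (iii) $\Rightarrow$ (ii). The first implication (ii) $\Rightarrow$ (i) is immediate, since each $E\hat x$ lies in $EV$, so $u\star v=0$ for all $u,v\in EV$ specializes to $E\hat x\star E\hat y=0$ for all $x,y\in X$.

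For (i) $\Rightarrow$ (iii), I would invoke Lemma \ref{lem:warm}, which gives $E\hat x \star E\hat x = |X|^{-1} q^1_{1,1}\, E\hat x$ for every $x\in X$. Assuming (i), the left-hand side is zero. Since the vectors $\{E\hat x \mid x\in X\}$ are nonzero (recall \cite[Theorem~1.1]{QPchar}) and $|X|^{-1}\neq 0$, it follows that $q^1_{1,1}=0$, which is (iii).

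For (iii) $\Rightarrow$ (ii), I would use equation (\ref{prop:norton}) with $i=j=1$ together with $E=E_1$, obtaining $EV\circ EV = \sum_{h:\,q^h_{1,1}\neq 0} E_hV$. If $q^1_{1,1}=0$, then the index $h=1$ does not occur in this sum, so $EV\circ EV \subseteq \sum_{h\neq 1} E_hV$. Because $E=E_1$ satisfies $E_1E_h=\delta_{1,h}E_1$ and $I=\sum_{h=0}^d E_h$, the idempotent $E$ annihilates $E_hV$ for every $h\neq 1$; hence $E(u\circ v)=0$, i.e. $u\star v=0$, for all $u,v\in EV$, which is (ii). This closes the cycle and proves the lemma.

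I do not anticipate any real obstacle here; the only step needing a little care is the last one, where one must observe that for $u,v\in EV$ the vector $u\circ v$ lies in $EV\circ EV$ before the projection $E$ is applied, and that this subspace is a sum of eigenspaces $E_hV$ none of which survives the projection $E$ once $q^1_{1,1}=0$. (Alternatively, (i) $\Rightarrow$ (ii) can be obtained directly from bilinearity of $\star$ and the fact that $\{E\hat x\mid x\in X\}$ spans $EV$, but the cycle above already yields all the equivalences.)
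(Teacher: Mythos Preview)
Your argument is correct and follows essentially the same route as the paper, which simply cites (\ref{prop:norton}) and ``the construction.'' The only minor difference is that you pass through Lemma~\ref{lem:warm} for (i) $\Rightarrow$ (iii), whereas one can obtain (i) $\Leftrightarrow$ (ii) directly from bilinearity and the fact that $\{E\hat x\mid x\in X\}$ spans $EV$, and then read off (ii) $\Leftrightarrow$ (iii) from (\ref{prop:norton}); but this is a cosmetic variation, not a genuine difference in approach.
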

\begin{proof} By 
(\ref{prop:norton}) and the construction.
\end{proof}

\noindent We have been discussing some extremal cases. Before we proceed to the general case, we bring in some notation.
To motivate things, observe that for $x \in X$ and $0 \leq i \leq d$,
\begin{align}
A_i \hat x = \sum_{z \in \Gamma_i(x)}  {\hat z}.
\label{eq:AiExp}
\end{align}
\begin{lemma}\label{lem:double} For
$x,y \in X$ and $0 \leq i,j\leq d$ we have
\begin{align*}
A_i \hat x \circ A_j \hat y = 
\sum_{z \in \Gamma_i(x) \cap \Gamma_j(y)}  {\hat z}.
 \end{align*}
 \end{lemma}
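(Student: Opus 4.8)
The plan is to expand both factors using equation (\ref{eq:AiExp}) and then apply the bilinearity of the entry-wise product $\circ$ together with the orthogonality relation (\ref{eq:xycirc}).

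First I would write $A_i \hat x = \sum_{z \in \Gamma_i(x)} \hat z$ and $A_j \hat y = \sum_{w \in \Gamma_j(y)} \hat w$, which are instances of (\ref{eq:AiExp}). Since $\circ$ is bilinear, I would then distribute to get
\begin{align*}
A_i \hat x \circ A_j \hat y
= \sum_{z \in \Gamma_i(x)} \; \sum_{w \in \Gamma_j(y)} \hat z \circ \hat w.
\end{align*}

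Next I would invoke (\ref{eq:xycirc}), which says $\hat z \circ \hat w = \hat z$ if $z = w$ and $\hat z \circ \hat w = 0$ otherwise. Thus the only surviving terms in the double sum are those with $z = w$, that is, those indexed by $z \in \Gamma_i(x) \cap \Gamma_j(y)$, and each such term contributes $\hat z$. This yields
\begin{align*}
A_i \hat x \circ A_j \hat y = \sum_{z \in \Gamma_i(x) \cap \Gamma_j(y)} \hat z,
\end{align*}
as desired.

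There is no real obstacle here; the statement is a direct bookkeeping consequence of the definitions, and the only thing to be careful about is keeping the two summation indices distinct before collapsing them via the Kronecker delta in (\ref{eq:xycirc}).
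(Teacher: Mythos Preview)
Your proof is correct and follows exactly the approach the paper indicates: expand via (\ref{eq:AiExp}) and collapse using (\ref{eq:xycirc}). The paper's own proof is just the one-line instruction ``Use (\ref{eq:xycirc}) and (\ref{eq:AiExp}),'' which your argument unpacks in full.
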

 \begin{proof} Use
 (\ref{eq:xycirc})
  and
 (\ref{eq:AiExp}).
 \end{proof}
 
\begin{definition}\label{def:xy}
\rm Pick $x, y \in X$ and write $i = \partial (x,y)$. Define
\begin{align}
 x^+_y &= A \hat x \circ A_{i+1} \hat y = \sum_{z \in \Gamma(x) \cap \Gamma_{i+1}(y)}  {\hat z},
 \label{eq:xyp}
\\
x^0_y &= A \hat x \circ A_{i} \hat y = 
\sum_{z\in \Gamma(x) \cap \Gamma_i(y)} {\hat z},
\label{eq:xy0}
\\x^-_y &= A \hat x \circ A_{i-1} \hat y = 
\sum_{z \in \Gamma(x) \cap \Gamma_{i-1}(y) }  {\hat z},
\label{eq:xym}
 \end{align}
\noindent where we understand
$A_{-1}=0$, $\Gamma_{-1}(x)= \emptyset$ and
$A_{d+1}=0$, $\Gamma_{d+1}(x)=\emptyset$.
\end{definition}
\noindent We clarify the notation (\ref{eq:xyp})--(\ref{eq:xym}).
 Pick $x, y \in X$. If $\partial(x,y)=d$ then $x^+_y = 0$. If $\partial(x,y)=1$ then $x^-_y = \hat y$.  If $x=y$ then $x^0_y=0$ and $x^-_y=0$.

\begin{lemma}
\label{lem:sum} For $x,y \in X$ we have
\begin{align}
x^+_y + x^0_y + x^-_y &= A \hat x,
\label{eq:sum1}
\\
Ex^+_y + Ex^0_y + Ex^-_y &= \theta_1 E \hat x.
\label{eq:sum2}
\end{align}
\end{lemma}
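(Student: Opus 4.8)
The plan is to derive both identities directly from the expansions in Definition~\ref{def:xy}. For \eqref{eq:sum1}, fix $x,y \in X$ and put $i = \partial(x,y)$. Each of the three vectors $x^+_y$, $x^0_y$, $x^-_y$ is, by Lemma~\ref{lem:double}, the characteristic vector of $\Gamma(x) \cap \Gamma_{i+1}(y)$, $\Gamma(x) \cap \Gamma_i(y)$, and $\Gamma(x) \cap \Gamma_{i-1}(y)$ respectively. So the sum $x^+_y + x^0_y + x^-_y$ is the characteristic vector of the union of these three sets. Now observe that if $z \in \Gamma(x)$ then, by the triangle inequality applied to the edge $\{x,z\}$, we have $|\partial(z,y) - \partial(x,y)| \le 1$, i.e. $\partial(z,y) \in \{i-1, i, i+1\}$. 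Hence $\Gamma(x)$ is the disjoint union of $\Gamma(x)\cap\Gamma_{i-1}(y)$, $\Gamma(x)\cap\Gamma_i(y)$, and $\Gamma(x)\cap\Gamma_{i+1}(y)$ (disjointness is automatic since the $\Gamma_j(y)$ are pairwise disjoint, and the boundary conventions $\Gamma_{-1}(y) = \Gamma_{d+1}(y) = \emptyset$ handle the cases $i = 0$ and $i = d$). Therefore the sum equals $\sum_{z \in \Gamma(x)} \hat z$, which is $A\hat x$ by \eqref{eq:AiExp}.

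For \eqref{eq:sum2}, I would simply apply the linear map $E$ to both sides of \eqref{eq:sum1}: since $E$ is linear, $E x^+_y + E x^0_y + E x^-_y = E(A\hat x)$, and then use $EA = AE = \theta_1 E$ (which follows from $A = \sum_{i=0}^d \theta_i E_i$ together with the orthogonality relations $E_i E_j = \delta_{i,j} E_i$ and $E = E_1$), so $E(A\hat x) = (EA)\hat x = \theta_1 E\hat x$.

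There is no real obstacle here; the only point requiring a moment's care is the disjoint-union decomposition of $\Gamma(x)$, and in particular checking that the degenerate cases $i=0$ (where $\Gamma_{i-1}(y)$ is empty) and $i=d$ (where $\Gamma_{i+1}(y)$ is empty) are correctly absorbed by the conventions $A_{-1} = A_{d+1} = 0$ fixed in Definition~\ref{def:xy}. Everything else is bookkeeping with characteristic vectors via Lemma~\ref{lem:double} and \eqref{eq:AiExp}, plus the standard fact $EA = \theta_1 E$ for the final step.
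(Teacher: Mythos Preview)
Your proof is correct and follows essentially the same approach as the paper: both arguments verify \eqref{eq:sum1} by observing that each side equals $\sum_{z \in \Gamma(x)} \hat z$, and then obtain \eqref{eq:sum2} by applying $E$ and using $EA = \theta_1 E$. You supply more detail on the disjoint-union decomposition and the boundary cases, but the underlying argument is identical.
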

\begin{proof} To verify (\ref{eq:sum1}), note that each side is equal to 
$\sum_{z \in \Gamma(x)} \hat z$.
To get (\ref{eq:sum2}), apply $E$ to each side of (\ref{eq:sum1}),  and use $EA = \theta_1 E$.
\end{proof}
 
 \noindent The following is our first main result.

\begin{theorem}\label{thm:main}
Assume that $\Gamma$ is  $Q$-polynomial with respect to $E$.
Then for all $x,y\in X$ we have
\begin{align}
 E{\hat x}\star E{\hat y} &=
 \frac{ (\theta^*_{i-1}-\theta^*_i) Ex^-_y+(\theta^*_{i+1}-\theta^*_{i})
Ex_y^+ +(\theta_1-\theta_2)\theta^*_i E{\hat x}+(\theta_2-\theta_0)E{\hat y}}
  {\vert X \vert (\theta_1-\theta_2)}
  \label{eq:main}
\end{align}
where $i = \partial (x,y)$. Here $\theta^*_{-1} $ and $\theta^*_{d+1} $ denote indeterminates.
\end{theorem}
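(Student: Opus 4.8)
The strategy is to obtain $E\hat x\star E\hat y=E(E\hat x\circ E\hat y)$ by decomposing $\hat x$ into its primitive-idempotent components and using the $Q$-polynomial vanishing of the Krein parameters to reduce to a small linear system; the raw input consists of three entry-wise-product computations. Set $i=\partial(x,y)$. The first and main ingredient is a closed form for $A\hat x\circ E\hat y$. Expanding $E$ via \eqref{eq:Esum} and applying Lemma \ref{lem:double}, a common neighbour of $x$ lying in $\Gamma_s(y)$ forces $|s-i|\le 1$ by the triangle inequality, so only $s\in\{i-1,i,i+1\}$ contribute and, by Definition \ref{def:xy}, one gets $A\hat x\circ E\hat y=|X|^{-1}(\theta^*_{i-1}x^-_y+\theta^*_i x^0_y+\theta^*_{i+1}x^+_y)$; the boundary cases $i=0,d$ are harmless since the indeterminate $\theta^*_{-1}$ (resp.\ $\theta^*_{d+1}$) multiplies the zero vector $x^-_y$ (resp.\ $x^+_y$). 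Substituting $x^0_y=A\hat x-x^+_y-x^-_y$ from \eqref{eq:sum1} gives $A\hat x\circ E\hat y=|X|^{-1}\big((\theta^*_{i-1}-\theta^*_i)x^-_y+(\theta^*_{i+1}-\theta^*_i)x^+_y+\theta^*_i A\hat x\big)$, and applying $E$ with $EA=\theta_1E$ turns this into the numerator of \eqref{eq:main} up to two correction terms. The remaining two ingredients are easy and come from $E_0=|X|^{-1}J$ together with ${\bf 1}\circ v=v$: first $\hat x\circ E\hat y=|X|^{-1}\theta^*_i\hat x$, hence $E(\hat x\circ E\hat y)=|X|^{-1}\theta^*_iE\hat x$; second $E_0\hat x\circ E\hat y=|X|^{-1}E\hat y$, hence $E(E_0\hat x\circ E\hat y)=|X|^{-1}E\hat y$.

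Now the structural step. Put $Q_j=E(E_j\hat x\circ E\hat y)$ for $0\le j\le d$. Since $E_j\hat x\in E_jV$ and $E\hat y\in E_1V$, equation \eqref{prop:norton} gives $E_j\hat x\circ E\hat y\in\sum_{h:\,q^h_{j,1}\ne 0}E_hV$, and projecting by $E=E_1$ shows $Q_j=0$ whenever $q^1_{j,1}=0$; by the $Q$-polynomial property this holds for all $j\ge 3$ (then $j>1+1$). Hence, applying $E$ to $\hat x\circ E\hat y=\sum_j(E_j\hat x)\circ E\hat y$ and to $A\hat x\circ E\hat y=\sum_j\theta_j(E_j\hat x)\circ E\hat y$ yields $E(\hat x\circ E\hat y)=Q_0+Q_1+Q_2$ and $E(A\hat x\circ E\hat y)=\theta_0Q_0+\theta_1Q_1+\theta_2Q_2$. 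Because $\theta_0,\theta_1,\theta_2$ are mutually distinct, these two equations together with the already-computed value of $Q_0$ determine $Q_1$; eliminating $Q_2$ gives $(\theta_1-\theta_2)Q_1=E(A\hat x\circ E\hat y)-\theta_2E(\hat x\circ E\hat y)+(\theta_2-\theta_0)Q_0$, and $Q_1=E(E_1\hat x\circ E\hat y)=E(E\hat x\circ E\hat y)=E\hat x\star E\hat y$ by Definition \ref{def:norton}.

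Finally I would substitute the three ingredients of the first paragraph into this identity: the $\theta_1\theta^*_iE\hat x$ coming from $E(A\hat x\circ E\hat y)$ and the $-\theta_2\theta^*_iE\hat x$ coming from $-\theta_2E(\hat x\circ E\hat y)$ combine to $(\theta_1-\theta_2)\theta^*_iE\hat x$, the term $(\theta_2-\theta_0)Q_0$ becomes $|X|^{-1}(\theta_2-\theta_0)E\hat y$, and dividing by $\theta_1-\theta_2$ produces exactly \eqref{eq:main}. I expect the main obstacle to be the structural step: recognizing that the $Q$-polynomial condition confines $E(E_j\hat x\circ E\hat y)$ to $j\in\{0,1,2\}$ is precisely what makes the three-term (effectively two-term, once $Q_0$ is known) linear system solvable for the Norton product; everything else is careful bookkeeping with the entry-wise product and with the conventions for $\theta^*_{-1}$ and $\theta^*_{d+1}$.
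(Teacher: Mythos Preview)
Your proof is correct and follows essentially the same approach as the paper: both compute $E(A\hat x\circ E\hat y)$ and $E(\hat x\circ E\hat y)$ via the expansion $E=|X|^{-1}\sum_\ell\theta^*_\ell A_\ell$, use the $Q$-polynomial property to kill the contributions $E(E_j\hat x\circ E\hat y)$ for $j\ge 3$, and then form the linear combination with coefficients $1,-\theta_2$ to eliminate the $j=2$ term. The only cosmetic difference is that the paper packages this combination up front as the single vector $E\bigl((A-\theta_2 I)\hat x\circ E\hat y\bigr)=\sum_h(\theta_h-\theta_2)E(E_h\hat x\circ E\hat y)$, so the $h=2$ summand vanishes ``by construction'' rather than by an explicit elimination; and it postpones the substitution $Ex^0_y=\theta_1E\hat x-Ex^-_y-Ex^+_y$ (your \eqref{eq:sum1} applied after $E$, their \eqref{eq:sum2}) to the final line rather than doing it before applying $E$.
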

\begin{proof} We consider the vector
\begin{align*}
 E (A \hat x \circ E \hat y) -\theta_2 
 E (\hat x \circ E \hat y).
 \end{align*}
We evaluate this vector in two ways.
For the first evaluation, use $A-\theta_2 I = \sum_{h=0}^d (\theta_h - \theta_2) E_h$
to obtain
\begin{align*}
 E (A \hat x \circ E \hat y) -\theta_2 
 E (\hat x \circ E \hat y) 
&=\sum_{h=0}^d (\theta_h-\theta_2) E (E_h \hat x \circ E \hat y).
\end{align*}
For the above sum, we examine the $h$-summand for $0 \leq  h \leq d$.  For $h=0$ the summand is $(\theta_0-\theta_2)\vert X \vert^{-1} E \hat y$ because
\begin{align*}
E_0 \hat x \circ E \hat y = \vert X \vert^{-1} J \hat x \circ E \hat y = \vert X \vert^{-1} {\bf 1} \circ E \hat y = \vert X \vert^{-1} E \hat y.
\end{align*}
For $h=1$ the summand is
$(\theta_1-\theta_2)E\hat x \star E \hat y$ by Definition \ref{def:norton}.
For  $h=2$ the summand is zero by construction. For 
$3 \leq h \leq d$ the summand is zero by (\ref{prop:norton}) and the definition of $Q$-polynomial.
 By these comments,
\begin{align}
 E (A \hat x \circ E \hat y) -\theta_2 
 E (\hat x \circ E \hat y) =
 (\theta_0-\theta_2) \vert X \vert^{-1} E \hat y + (\theta_1-\theta_2)E \hat x \star E \hat y.
\label{eq:L1}
\end{align}
For the second evaluation, use $E= \vert X \vert^{-1} \sum_{\ell=0}^d \theta^*_\ell A_\ell$ 
to obtain
\begin{align*}
(A-\theta_2 I) \hat x \circ E \hat y &= \vert X \vert^{-1} \sum_{\ell=0}^d \theta^*_\ell (A-\theta_2 I) \hat x \circ A_\ell \hat y.
\end{align*}
For the above sum, we examine the $\ell$-summand for $0 \leq \ell \leq d$.
The term $A \hat x \circ A_\ell \hat y$ is equal to $x^-_y$ (if $\ell=i-1$) and
$x^0_y$ (if $\ell=i$) and $x^+_y$ (if $\ell=i+1$) and zero 
(if $\vert \ell-i\vert > 1$). The term $\hat x \circ A_\ell \hat y$ is equal to
$\hat x$ (if $\ell=i$) and zero (if $\ell\not=i$). By these comments,
\begin{align*}
(A-\theta_2 I) \hat x \circ E \hat y &= \vert X \vert^{-1}(\theta^*_{i-1} x^-_y +
\theta^*_i x^0_y + \theta^*_{i+1} x^+_y - \theta_2 \theta^*_i  \hat x).
\end{align*}
Therefore
\begin{align}
 E (A \hat x \circ E \hat y) - \theta_2 E(\hat x \circ E \hat y) 
&= \vert X \vert^{-1}(\theta^*_{i-1} E x^-_y +
\theta^*_i E x^0_y + \theta^*_{i+1} E x^+_y - \theta_2 \theta^*_i E\hat x).
\label{eq:L2}
\end{align}
Comparing (\ref{eq:L1}), (\ref{eq:L2}) we obtain
\begin{align}
\vert X \vert  (\theta_1-\theta_2)E \hat x \star E \hat y
=
\theta^*_{i-1} E x^-_y +
\theta^*_i E x^0_y + \theta^*_{i+1} E x^+_y - \theta_2 \theta^*_i E\hat x 
+(\theta_2-\theta_0) E \hat y.
\label{eq:two}
\end{align}
In (\ref{eq:two}), eliminate the term $E x^0_y$  using (\ref{eq:sum2}).
In the resulting equation, solve for $E \hat x \star E \hat y$ and we are done.
\end{proof}

\noindent Referring to Theorem \ref{thm:main},  the formula for $E \hat x \star E \hat y$ can be simplified if $i\in \lbrace 0,1,d\rbrace $.  This simplification is discussed next.
\begin{corollary} \label{cor:01D} 
Assume that $\Gamma$ is  $Q$-polynomial with respect to $E$. Then {\rm (i)--(iii)} hold below.
\begin{enumerate}
\item[\rm (i)] For $x \in X$,
\begin{align*}
E \hat x \star E \hat x = \frac{\theta_1 \theta^*_1 - \theta_2 \theta^*_0 + \theta_2 - \theta_0}{\vert X \vert (\theta_1 - \theta_2)} E\hat x.
\end{align*}
\item[\rm (ii)] For $x,y \in X$ at distance $\partial(x,y)=1$,
\begin{align*}
E\hat x \star E \hat y = 
\frac{ (\theta^*_2-\theta^*_1) E x^+_y +(\theta_1-\theta_2)\theta^*_1 E \hat x + (\theta_2-\theta_0+\theta^*_0 - \theta^*_1)E \hat y}{\vert X \vert (\theta_1-\theta_2)}.
\end{align*}
\item[\rm (iii)] For $x,y \in X$ at distance $\partial(x,y)=d$,
\begin{align*}
 E{\hat x}\star E{\hat y} &=
 \frac{ (\theta^*_{d-1}-\theta^*_d) Ex^-_y +(\theta_1-\theta_2)\theta^*_d E{\hat x}+(\theta_2-\theta_0)E{\hat y}}
  {\vert X \vert (\theta_1-\theta_2)}.
\end{align*}
\end{enumerate}
\end{corollary}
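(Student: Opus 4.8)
The plan is to obtain all three parts by specializing Theorem~\ref{thm:main} to the boundary values $i\in\{0,1,d\}$ and simplifying using the remarks that follow Definition~\ref{def:xy} about the vectors $x^+_y$, $x^0_y$, $x^-_y$. No new idea is needed beyond careful bookkeeping of which of these vectors collapse at the endpoints.

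For part~(i) I would take $y=x$, so $i=\partial(x,x)=0$. By the clarification after Definition~\ref{def:xy} we have $x^-_x=0$, so the term $(\theta^*_{-1}-\theta^*_0)Ex^-_y$ in (\ref{eq:main}) is zero and the formal symbol $\theta^*_{-1}$ never actually contributes. Moreover, from Lemma~\ref{lem:double} and (\ref{eq:AiExp}), $x^+_x = A\hat x\circ A\hat x = \sum_{z\in\Gamma(x)}\hat z = A\hat x$, hence $Ex^+_x = EA\hat x = \theta_1 E\hat x$. Substituting these into (\ref{eq:main}) with $\theta^*_i=\theta^*_0$ and $E\hat y=E\hat x$, the numerator becomes $\bigl[(\theta^*_1-\theta^*_0)\theta_1 + (\theta_1-\theta_2)\theta^*_0 + (\theta_2-\theta_0)\bigr]E\hat x$; combining the first two summands via $(\theta^*_1-\theta^*_0)\theta_1+(\theta_1-\theta_2)\theta^*_0 = \theta_1\theta^*_1-\theta_2\theta^*_0$ gives the asserted formula. (As a sanity check this may be compared with Lemma~\ref{lem:warm}, which then reads off a closed form for $q^1_{1,1}$.)

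For part~(ii) I would set $i=1$; then $x^-_y=\hat y$ by the same clarification, so the term $(\theta^*_0-\theta^*_1)Ex^-_y$ in (\ref{eq:main}) equals $(\theta^*_0-\theta^*_1)E\hat y$, which I would absorb into the coefficient of $E\hat y$, producing the stated expression. For part~(iii) I would set $i=d$; then $x^+_y=0$, so the term $(\theta^*_{d+1}-\theta^*_d)Ex^+_y$ in (\ref{eq:main}) vanishes and the indeterminate $\theta^*_{d+1}$ drops out, leaving exactly the asserted formula.

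There is no genuine obstacle here; the only point deserving attention is checking that in each of the three cases the formal symbols $\theta^*_{-1}$ and $\theta^*_{d+1}$ occur in (\ref{eq:main}) only as coefficients of vectors that are identically zero, so that the specialization of Theorem~\ref{thm:main} is legitimate. Granting that, together with the identifications $x^-_x=0$, $x^+_x=A\hat x$, $x^-_y=\hat y$ (when $\partial(x,y)=1$), and $x^+_y=0$ (when $\partial(x,y)=d$), each part reduces to a one-line rearrangement.
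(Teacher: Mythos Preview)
Your proposal is correct and follows essentially the same approach as the paper: specialize Theorem~\ref{thm:main} at $i=0,1,d$ and use the identifications $x^-_x=0$, $Ex^+_x=\theta_1 E\hat x$, $x^-_y=\hat y$ (for $i=1$), and $x^+_y=0$ (for $i=d$). The only cosmetic difference is that the paper obtains $Ex^+_x=\theta_1 E\hat x$ via Lemma~\ref{lem:sum} (using $x^-_x=x^0_x=0$), whereas you compute $x^+_x=A\hat x\circ A\hat x=A\hat x$ directly, which amounts to the same thing.
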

\begin{proof} (i)  We evaluate (\ref{eq:main}) with $y=x$ and $i=0$. We have $x^-_y=0$ and $x^0_y=0$, so $E x^+_y = \theta_1 E \hat x$ by Lemma \ref{lem:sum}. 
\\
\noindent (ii) Set $i=1$ in (\ref{eq:main}) and use $x^-_y = \hat y$.
\\
\noindent (iii) Set $i=d$ in (\ref{eq:main}) and use $x^+_y = 0$.
\end{proof}

 \begin{corollary} \label{cor:q111}
 Assume that $\Gamma$ is $Q$-polynomial with respect to $E$.
 Then the Krein parameter $q^1_{1,1}$  satisfies
 \begin{align*}
 q^1_{1,1} = \frac{ \theta_1 \theta^*_1 - \theta_2 \theta^*_0 + \theta_2 - \theta_0 } {\theta_1-\theta_2}.
 \end{align*}
 \end{corollary}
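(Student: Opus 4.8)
The plan is to extract the scalar $q^1_{1,1}$ by comparing two already-established evaluations of the self-product $E\hat x \star E\hat x$ for a fixed vertex $x\in X$. Lemma \ref{lem:warm} gives
\begin{align*}
E\hat x \star E\hat x = |X|^{-1} q^1_{1,1}\, E\hat x,
\end{align*}
while Corollary \ref{cor:01D}(i) gives
\begin{align*}
E\hat x \star E\hat x = \frac{\theta_1\theta^*_1 - \theta_2\theta^*_0 + \theta_2 - \theta_0}{|X|(\theta_1-\theta_2)}\, E\hat x.
\end{align*}
Both right-hand sides are scalar multiples of the single vector $E\hat x$, so the argument is simply to equate the two scalar coefficients and then clear the factor $|X|^{-1}$.

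For this step to be legitimate I would invoke that $E\hat x$ is a \emph{nonzero} vector, which is recorded in the paragraph preceding Lemma \ref{lem:warm} (citing \cite[Theorem~1.1]{QPchar}); a nonzero vector cannot be written as two different scalar multiples of itself, so the coefficients agree. I would also note in passing that $\theta_1\neq\theta_2$, since the eigenvalues $\{\theta_i\}_{i=0}^d$ of $A$ are mutually distinct, so the denominator $\theta_1-\theta_2$ causes no difficulty.

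There is really no obstacle to overcome: the content of the corollary is entirely contained in Lemma \ref{lem:warm} and Corollary \ref{cor:01D}(i), and the proof is the one-line observation that equating their scalar coefficients and multiplying by $|X|$ yields
\begin{align*}
q^1_{1,1} = \frac{\theta_1\theta^*_1 - \theta_2\theta^*_0 + \theta_2 - \theta_0}{\theta_1-\theta_2},
\end{align*}
as claimed.
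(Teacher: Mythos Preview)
Your proof is correct and matches the paper's own argument exactly: the paper simply says ``Compare Lemma~\ref{lem:warm} and Corollary~\ref{cor:01D}(i).'' Your added remarks that $E\hat x\neq 0$ and $\theta_1\neq\theta_2$ make the comparison rigorous but do not depart from the intended route.
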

 \begin{proof} Compare 
 Lemma
 \ref{lem:warm} and Corollary  \ref{cor:01D}(i).
 \end{proof}
 
 \noindent The eigenvalue $\theta_2$ appears in the above results.  By \cite[Lemma~19.21]{LSnotes} we find that
 $1+ \theta_1$, $1+ \theta^*_1$ are nonzero and
 \begin{align}
 \frac{1+ \theta_1}{\theta_0-\theta_2} =
 \frac{1+ \theta^*_1}{\theta^*_0-\theta^*_2}.
\label{eq:th2}
 \end{align}

 \section{The Norton product in symmetric form}
 
 We continue to discuss  the distance-regular graph $\Gamma$  and its $Q$-polynomial primitive idempotent
$E$. Pick distinct $x, y \in X$. In the formula 
 (\ref{eq:main}) we computed
  $E \hat x \star E \hat y$. 
We have $E \hat x \star E \hat y=E \hat y \star E \hat x$,
so the right-hand side of   (\ref{eq:main}) must be invariant  if we interchange $x,y$.
In this section, we express the right-hand side of (\ref{eq:main}) in a form that makes this
invariance explicit. We will use a result known as the balanced set condition.

\begin{lemma} \label{lem:bal} {\rm (See \cite[Theorem~1.1]{QPchar}, \cite[Theorem~3.3]{newineq}.)} For distinct $x,y \in X$ we have
\begin{align}
Ex^-_y - E y^-_x &= c_i \frac{\theta^*_1-\theta^*_{i-1}}{\theta^*_0-\theta^*_i} (E \hat x - E \hat y),
\label{eq:bsc1}
\\
 Ex^+_y - E y^+_x &= b_i \frac{\theta^*_1-\theta^*_{i+1}}{\theta^*_0-\theta^*_i} (E \hat x - E \hat y),
 \label{eq:bsc2}
 \end{align}
 where $i=\partial(x,y)$.
 \end{lemma}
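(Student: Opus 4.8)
The plan is to prove the two balanced‑set identities by the same device used in the proof of Theorem \ref{thm:main}: form a well‑chosen vector, expand it once through the eigenvalue decomposition of a matrix in $M$, and once through the expansion $E=|X|^{-1}\sum_\ell \theta^*_\ell A_\ell$, and then compare. The natural vector to consider here is $E(A_i \hat x\circ E\hat y)$ together with its partner $E(A_i\hat y\circ E\hat x)$, or more symmetrically the difference of the two. Since $A_i=\sum_{h=0}^d \theta_i^{(h)}E_h$ in terms of the dual‑eigenvalue table (equivalently, one can write $A_i$ as a polynomial in $A$), the same $Q$‑polynomial vanishing that killed the summands with $h\ge 3$ in Theorem \ref{thm:main} will apply: only the $h=0,1,2$ terms of $E(E_h\hat x\circ E\hat y)$ survive, and the $h=2$ term vanishes because $q^2_{1,1}=0$ is forced by the $Q$‑polynomial condition (one of $2,1,1$ equals the sum of the other two only in the excluded way). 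This produces a relation expressing $E(A_i\hat x\circ E\hat y)$ in terms of $E\hat x\star E\hat y$, $E\hat x$ and $E\hat y$.

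First I would carry out the second evaluation: using $E=|X|^{-1}\sum_\ell \theta^*_\ell A_\ell$ and Lemma \ref{lem:double}, one gets
\begin{align*}
|X|\,E(A_i\hat x\circ E\hat y)=\sum_{\ell=0}^d \theta^*_\ell\, E\bigl(A_i\hat x\circ A_\ell\hat y\bigr),
\end{align*}
and here the key point is that $A_i\hat x\circ A_\ell\hat y=\sum_{z\in\Gamma_i(x)\cap\Gamma_\ell(y)}\hat z$ is supported on vertices $z$ with $\partial(x,z)=i$ and $\partial(y,z)=\ell$; when $\partial(x,y)=i$ the triangle inequality forces $\ell\in\{?\}$ — more usefully, I expect the cleaner route is to instead start from $E(A\hat x\circ E\hat y)$ as in Theorem \ref{thm:main} but now subtract the two versions obtained by swapping $x\leftrightarrow y$, so that the unknown Norton product $E\hat x\star E\hat y$ (which is symmetric) cancels. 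That leaves a linear relation purely among $Ex^-_y, Ex^+_y, Ey^-_x, Ey^+_x, E\hat x, E\hat y$, with coefficients built from $\theta_1,\theta_2$ and the $\theta^*_j$; feeding in (\ref{eq:sum2}) to eliminate $Ex^0_y$ and $Ey^0_x$ and using (\ref{eq:recurse}) to relate $c_i\theta^*_{i-1}+a_i\theta^*_i+b_i\theta^*_{i+1}$ to $\theta_1\theta^*_i$ should collapse everything to the stated form.

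The second, more structural step is to split the resulting single relation into the two separate identities (\ref{eq:bsc1}) and (\ref{eq:bsc2}). One expects the combined relation to read, schematically, $(\theta^*_{i-1}-\theta^*_1)(Ex^-_y-Ey^-_x)+(\theta^*_{i+1}-\theta^*_1)(Ex^+_y-Ey^+_x)=(\text{something})(E\hat x-E\hat y)$; to peel off each piece one needs a second independent relation. That second relation I would get from the $D=0$ sector, i.e. from the vector $E(A_d\hat x\circ E\hat y)$ or, cleaner still, from applying the whole argument with $A$ replaced by $A_2$ (also a polynomial in $A$, hence in $M$), which again only involves $h\le ?$ terms after the $Q$‑polynomial reduction and yields a second linear combination of the same quantities with different coefficients. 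Two independent relations in the two unknown differences $Ex^\pm_y-Ey^\pm_x$ (modulo $E\hat x-E\hat y$) then solve to give precisely (\ref{eq:bsc1}), (\ref{eq:bsc2}).

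The main obstacle will be the bookkeeping in that final linear‑algebra step: showing the two relations are genuinely independent (so the $2\times 2$ system is invertible, which should reduce to $\theta_1\ne\theta_2$ and the distinctness of the $\theta^*_j$) and then verifying that the solution simplifies exactly to the clean ratios $c_i(\theta^*_1-\theta^*_{i-1})/(\theta^*_0-\theta^*_i)$ and $b_i(\theta^*_1-\theta^*_{i+1})/(\theta^*_0-\theta^*_i)$. This is where (\ref{eq:recurse}) and (\ref{eq:th2}) will be needed to turn the raw combination of eigenvalues into the asserted closed form, and where one must be careful about the boundary cases $i=1$ (where $x^-_y=\hat y$, $y^-_x=\hat x$) and $i=d$ (where $x^+_y=y^+_x=0$) so that the indeterminates $\theta^*_{-1},\theta^*_{d+1}$ drop out consistently; I would handle those as sanity checks rather than separate arguments. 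Everything upstream — forming the vector, the two expansions, the $Q$‑polynomial vanishing — is a direct transcription of the Theorem \ref{thm:main} proof and should go through with no new idea.
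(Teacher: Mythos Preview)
The paper does not prove this lemma at all; it simply cites \cite{QPchar,newineq} and moves on. So there is no ``paper's own proof'' to compare against, and your proposal must be judged on its own merits.

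Your symmetrization idea is correct as far as it goes: subtracting the $x\leftrightarrow y$ versions of \eqref{eq:two} and eliminating $Ex^0_y-Ey^0_x$ via \eqref{eq:sum2} yields exactly one relation,
\[
(\theta^*_{i-1}-\theta^*_i)(Ex^-_y-Ey^-_x)+(\theta^*_{i+1}-\theta^*_i)(Ex^+_y-Ey^+_x)
= \bigl[(\theta_2-\theta_1)\theta^*_i+\theta_2-\theta_0\bigr](E\hat x-E\hat y).
\]
This is precisely the identity the paper \emph{derives} as Lemma~\ref{cor:cibi} \emph{from} Lemma~\ref{lem:bal}, not the other way around. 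You still need a second, independent relation to separate the two differences.

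The gap is in your proposed second relation. Replacing $A$ by $A_2$ in the Theorem~\ref{thm:main} machinery does not give another equation in the \emph{same} unknowns: the term $A_2\hat x\circ A_\ell\hat y=\sum_{z\in\Gamma_2(x)\cap\Gamma_\ell(y)}\hat z$ is supported on vertices at distance $2$ from $x$, so you pick up new vectors $E\bigl(\sum_{z\in\Gamma_2(x)\cap\Gamma_{i\pm2}(y)}\hat z\bigr)$ that are not expressible in terms of $Ex^\pm_y,Ey^\pm_x,E\hat x,E\hat y$. The system does not close. (Incidentally, your side remark that $q^2_{1,1}=0$ is backwards: since $2=1+1$, the $Q$-polynomial condition forces $q^2_{1,1}\neq 0$. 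In Theorem~\ref{thm:main} the $h=2$ term dies because of the factor $\theta_2-\theta_2$, not because $E(E_2\hat x\circ E\hat y)=0$.)

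The proofs in \cite{QPchar,newineq} use a genuinely different mechanism. The hard part is showing that $Ex^-_y-Ey^-_x$ actually lies in $\mathrm{span}(E\hat x-E\hat y)$; once that is known, the coefficient drops out immediately from the inner-product computation $\langle E\hat z,E\hat w\rangle=|X|^{-1}\theta^*_{\partial(z,w)}$, which gives
$\langle Ex^-_y-Ey^-_x,\,E\hat x-E\hat y\rangle=2|X|^{-1}c_i(\theta^*_1-\theta^*_{i-1})$ against
$\|E\hat x-E\hat y\|^2=2|X|^{-1}(\theta^*_0-\theta^*_i)$. Establishing that span membership is where the real work lies (an induction on $i$ in \cite{QPchar}, or a module-theoretic argument in \cite{newineq}), and it is not a consequence of the Norton-product identity alone.
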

 
 \begin{corollary}
 \label{cor:CB}  For distinct $x,y \in X$ we have 
 \begin{align*}
 C(x,y) = C(y,x), \qquad \qquad B(x,y)= B(y,x)
 \end{align*}
 \noindent where
 \begin{align}
 C(x,y) &= 
Ex^-_y -  c_i \frac{\theta^*_1-\theta^*_{i-1}}{\theta^*_0-\theta^*_i} E \hat x,
\label{eq:CC1}
\\
B(x,y) &= Ex^+_y -  b_i \frac{\theta^*_1-\theta^*_{i+1}}{\theta^*_0-\theta^*_i} E \hat x
\label{eq:BB1}
 \end{align}
 and $i = \partial(x,y)$.
\end{corollary}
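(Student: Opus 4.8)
The plan is to prove Corollary \ref{cor:CB} as an essentially immediate consequence of the balanced set condition in Lemma \ref{lem:bal}. The key observation is that each of $C(x,y)$ and $B(x,y)$ has been defined in \eqref{eq:CC1}, \eqref{eq:BB1} precisely so that the $x \leftrightarrow y$ discrepancy cancels. So the first step is simply to form the difference $C(x,y) - C(y,x)$ and expand it using the definition \eqref{eq:CC1}, noting that $\partial(x,y) = \partial(y,x) = i$ so the same $i$ appears in both terms.

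Carrying this out, one gets
\begin{align*}
C(x,y) - C(y,x) = \bigl(Ex^-_y - Ey^-_x\bigr) - c_i \frac{\theta^*_1 - \theta^*_{i-1}}{\theta^*_0 - \theta^*_i}\bigl(E\hat x - E\hat y\bigr).
\end{align*}
By \eqref{eq:bsc1} the first parenthesized term equals $c_i \frac{\theta^*_1-\theta^*_{i-1}}{\theta^*_0-\theta^*_i}(E\hat x - E\hat y)$, which is exactly the second term, so the difference is zero and $C(x,y) = C(y,x)$. The argument for $B$ is identical, using \eqref{eq:bsc2} in place of \eqref{eq:bsc1} and \eqref{eq:BB1} in place of \eqref{eq:CC1}.

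There is essentially no obstacle here; the only point requiring a word of care is that the denominator $\theta^*_0 - \theta^*_i$ is nonzero, which holds because the dual eigenvalues $\lbrace \theta^*_i \rbrace_{i=0}^d$ are mutually distinct (as recalled after \eqref{eq:Esum}, citing \cite[p.~260]{banIto}), and that when $i$ takes a boundary value the indeterminates $\theta^*_{-1}$, $\theta^*_{d+1}$ do not actually intrude: when $i = \partial(x,y)$ equals $1$ we have $x^-_y = \hat y$ and the coefficient involves $\theta^*_{i-1} = \theta^*_0$, while when $i = d$ we have $x^+_y = 0$ with $b_d = 0$, so the relevant terms are genuine. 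Thus the proof is just: subtract, apply Lemma \ref{lem:bal}, observe the cancellation.
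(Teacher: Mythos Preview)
Your proof is correct and follows the same approach as the paper, which simply says to rearrange the terms in \eqref{eq:bsc1} and \eqref{eq:bsc2}. You have merely written out this rearrangement explicitly.
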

\begin{proof} Rearrange the terms in (\ref{eq:bsc1}), (\ref{eq:bsc2}).
\end{proof}
\noindent We clarify the meaning of (\ref{eq:CC1}) and (\ref{eq:BB1}).
For $i=1$ we have $C(x,y) = E\hat x + E \hat y$. For $i=d$ we have $B(x,y)=0$.
\medskip

\noindent For distinct $x,y \in X$ we are going to express $E\hat x \star E \hat y$ in terms of $C(x,y)$ and $B(x,y)$.
The following equation will be useful.

 \begin{lemma} \label{cor:cibi}
 We have
 \begin{align*}
 c_i  \frac{ (\theta^*_1 - \theta^*_{i-1})(\theta^*_{i-1} - \theta^*_i)}{\theta^*_0-\theta^*_i}
 + b_i  \frac{ (\theta^*_1 - \theta^*_{i+1})(\theta^*_{i+1} - \theta^*_i)}{\theta^*_0-\theta^*_i}
= (\theta_2 - \theta_1) \theta^*_i + \theta_2 - \theta_0
\end{align*}
for $1 \leq i \leq d-1$ and
 \begin{align*}
 c_d \frac{ (\theta^*_1 - \theta^*_{d-1})(\theta^*_{d-1} - \theta^*_d)}{\theta^*_0-\theta^*_d}
= (\theta_2 - \theta_1) \theta^*_d + \theta_2 - \theta_0.
\end{align*}
\end{lemma}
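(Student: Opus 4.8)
The plan is to derive both identities from the three-term recurrence (\ref{eq:recurse}) together with the relation (\ref{eq:th2}) that pins down $\theta_2$. First I would treat the generic range $1 \le i \le d-1$. The left-hand side is a rational expression with denominator $\theta^*_0 - \theta^*_i$, so I would clear that denominator and aim to prove the polynomial identity
\begin{align*}
c_i (\theta^*_1 - \theta^*_{i-1})(\theta^*_{i-1} - \theta^*_i)
 + b_i (\theta^*_1 - \theta^*_{i+1})(\theta^*_{i+1} - \theta^*_i)
= \bigl((\theta_2 - \theta_1)\theta^*_i + \theta_2 - \theta_0\bigr)(\theta^*_0 - \theta^*_i).
\end{align*}
The natural way to expose $c_i$ and $b_i$ in a usable form is to write the recurrence (\ref{eq:recurse}) as $c_i(\theta^*_{i-1} - \theta^*_i) + b_i(\theta^*_{i+1} - \theta^*_i) = (\theta_1 - a_i - c_i - b_i)\theta^*_i = \theta_1\theta^*_i - k\theta^*_i$ — wait, more usefully, $c_i(\theta^*_{i-1}-\theta^*_i) + b_i(\theta^*_{i+1}-\theta^*_i) = (\theta_1 - k)\theta^*_i$ since $a_i = k - b_i - c_i$. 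So I would expand the left-hand side above, split off the factor $(\theta^*_{i-1}-\theta^*_i)$ and $(\theta^*_{i+1}-\theta^*_i)$, and repeatedly substitute using this rearranged recurrence; the $\theta^*_1$ contributions will give a term $\theta^*_1(\theta_1 - k)\theta^*_i$ (up to a known correction), and the quadratic-in-$\theta^*$ part will give $c_i(\theta^*_i - \theta^*_{i-1})^2 + b_i(\theta^*_i-\theta^*_{i+1})^2$ with a sign.

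The cleanest route is probably to substitute $\theta^*_{i\pm1}$ directly. From (\ref{eq:recurse}), $b_i\theta^*_{i+1} = \theta_1\theta^*_i - c_i\theta^*_{i-1} - a_i\theta^*_i$ and $c_i\theta^*_{i-1} = \theta_1\theta^*_{i-1} - \dots$; but a more symmetric bookkeeping is to introduce $\alpha = c_i(\theta^*_1-\theta^*_{i-1})$ and $\beta = b_i(\theta^*_1-\theta^*_{i+1})$, so the target LHS is $\alpha(\theta^*_{i-1}-\theta^*_i) + \beta(\theta^*_{i+1}-\theta^*_i)$. Now $\alpha + \beta = (c_i+b_i)\theta^*_1 - c_i\theta^*_{i-1} - b_i\theta^*_{i+1} = (c_i+b_i)\theta^*_1 - (\theta_1 - a_i)\theta^*_i$ by (\ref{eq:recurse}), and since $c_i + b_i = k - a_i$ this is $(k-a_i)\theta^*_1 - (\theta_1-a_i)\theta^*_i$. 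Similarly $\alpha\theta^*_{i-1} + \beta\theta^*_{i+1}$ can be reduced: write it as $\theta^*_1(c_i\theta^*_{i-1}+b_i\theta^*_{i+1}) - (c_i(\theta^*_{i-1})^2 + b_i(\theta^*_{i+1})^2)$, and the first part is $\theta^*_1((\theta_1-a_i)\theta^*_i)$. So the whole LHS becomes a linear combination of $\theta^*_i$, $(\theta^*_i)^2$, $\theta^*_1\theta^*_i$, $\theta^*_1$, and the leftover $c_i(\theta^*_{i-1})^2 + b_i(\theta^*_{i+1})^2$. To kill that last awkward quadratic term I would use the recurrence once more in the combination $c_i(\theta^*_{i-1}-\theta^*_i)(\theta^*_{i-1}-\theta^*_1) + \dots$ — equivalently, compute $c_i(\theta^*_{i-1})^2 + b_i(\theta^*_{i+1})^2$ by multiplying (\ref{eq:recurse}) by $\theta^*_{i-1}$ and by $\theta^*_{i+1}$ appropriately, or just note the identity follows from the second iterate of the recurrence (the $i=1$ case applied with a shift). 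Collecting all terms and comparing coefficients of $1,\theta^*_i,(\theta^*_i)^2$, the identity reduces to a fixed scalar relation among $\theta_0,\theta_1,\theta_2,\theta^*_0,\theta^*_1,\theta^*_2$, which is exactly (\ref{eq:th2}) after clearing denominators.

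For the boundary case $i = d$, the term $b_d = 0$ and $\theta^*_{d+1}$ is an indeterminate that multiplies $0$, so that summand vanishes and the statement collapses to $c_d(\theta^*_1-\theta^*_{d-1})(\theta^*_{d-1}-\theta^*_d)/(\theta^*_0-\theta^*_d) = (\theta_2-\theta_1)\theta^*_d + \theta_2 - \theta_0$; this is literally the generic identity with the $b_i$-summand deleted, so it follows from the same computation by setting $i = d$ and $b_d = 0$ throughout (the reductions above used $b_d\theta^*_{d+1} = 0$, not the value of $\theta^*_{d+1}$). The main obstacle I anticipate is the elimination of the quadratic terms $c_i(\theta^*_{i-1})^2 + b_i(\theta^*_{i+1})^2$: getting this into the required linear-in-$\theta^*_i$ form requires one clean application of the recurrence beyond the obvious ones, and bookkeeping the signs there is where an error would most likely creep in. An alternative that sidesteps some of this — which I would fall back on if the direct expansion gets unwieldy — is to invoke the Leonard system / dual eigenvalue theory (as the paper promises in Section 5), where $\theta^*_{i-1}-\theta^*_i$ factors nicely and the recurrence coefficients $c_i,b_i$ have closed forms making the identity a routine verification; but I would prefer to keep the proof self-contained using only (\ref{eq:recurse}) and (\ref{eq:th2}).
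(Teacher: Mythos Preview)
Your route is genuinely different from the paper's, and as written it has a real gap.

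The paper does not attack the scalar identity directly. Instead it reads the identity off the commutativity of the Norton product: starting from $0 = E\hat x \star E\hat y - E\hat y \star E\hat x$, it expands both terms using Theorem~\ref{thm:main}, then replaces $Ex^-_y - Ey^-_x$ and $Ex^+_y - Ey^+_x$ by the balanced-set formulas (Lemma~\ref{lem:bal}). Everything becomes a scalar multiple of $E\hat x - E\hat y$, and since $E\hat x \neq E\hat y$ that scalar must vanish; the vanishing of the scalar is exactly the lemma. So the $Q$-polynomial hypothesis enters through the balanced-set condition, not through any recurrence manipulation.

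Your plan, by contrast, tries to prove the identity from (\ref{eq:recurse}) and (\ref{eq:th2}) alone. That cannot succeed. Fix $i$ with $2 \le i \le d-1$ and regard $(c_i,a_i,b_i)$ as unknowns subject only to the two linear constraints $c_i\theta^*_{i-1} + a_i\theta^*_i + b_i\theta^*_{i+1} = \theta_1\theta^*_i$ and $c_i + a_i + b_i = \theta_0$. The left-hand side of the lemma is a linear form in $(c_i,b_i)$ with $a_i$-coefficient zero, so if it followed from those two constraints it would have to be a combination $\lambda\cdot(\text{first}) + \mu\cdot(\text{second})$ with $\lambda\theta^*_i + \mu = 0$; matching the $c_i$-coefficient forces $\lambda = (\theta^*_1-\theta^*_{i-1})/(\theta^*_0-\theta^*_i)$, but then the $b_i$-coefficient comes out as $(\theta^*_1-\theta^*_{i-1})(\theta^*_{i+1}-\theta^*_i)/(\theta^*_0-\theta^*_i)$ rather than the required $(\theta^*_1-\theta^*_{i+1})(\theta^*_{i+1}-\theta^*_i)/(\theta^*_0-\theta^*_i)$. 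These differ unless $\theta^*_{i-1} = \theta^*_{i+1}$, which is false. The recurrences at neighbouring indices do not help, since they involve different intersection numbers; and (\ref{eq:th2}) is a relation among $\theta_0,\theta_1,\theta_2,\theta^*_0,\theta^*_1,\theta^*_2$ only. So the ``one clean application of the recurrence beyond the obvious ones'' that you hope will kill the quadratic term does not exist with the stated inputs.

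To repair your approach you would need a further piece of $Q$-polynomial structure --- either the balanced-set condition itself (which is what the paper uses), the closed-form Leonard expressions for $c_i,b_i$ (your stated fallback, which works but is not self-contained), or the dual three-term recurrence satisfied by the sequence $\{\theta^*_i\}$. Any of those supplies the missing constraint; the primal recurrence (\ref{eq:recurse}) together with (\ref{eq:th2}) does not.
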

\begin{proof}  In the equation $0 = E\hat x \star E \hat y - E\hat y \star E \hat x$, expand the right-hand side using
Theorem \ref{thm:main} and evaluate the result using
Lemma \ref{lem:bal}. Examine the outcome using the fact that $E \hat x \not=E \hat y$.
\end{proof}
\noindent The following is our second main result.
\begin{theorem}\label{thm:main2}
Assume that $\Gamma$ is $Q$-polynomial with respect to $E$. Then for distinct $x,y \in X$ we have
\begin{align}
 E{\hat x}\star E{\hat y} &=
 \frac{ (\theta^*_{i-1}-\theta^*_i) C(x,y)+(\theta^*_{i+1}-\theta^*_{i})
B(x,y) +(\theta_2-\theta_0)(E \hat x + E{\hat y})}
  {\vert X \vert (\theta_1-\theta_2)}
  \label{eq:main2}
\end{align}
where $i = \partial (x,y)$. Here  $\theta^*_{d+1} $ denotes an indeterminate.
\end{theorem}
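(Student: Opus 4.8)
The plan is to derive (\ref{eq:main2}) directly from the first main result, Theorem \ref{thm:main}, by substituting the definitions of $C(x,y)$ and $B(x,y)$ and using Lemma \ref{cor:cibi} to collapse the $c_i$- and $b_i$-dependent terms. Starting from (\ref{eq:main}), I would replace $Ex^-_y$ with $C(x,y) + c_i \frac{\theta^*_1-\theta^*_{i-1}}{\theta^*_0-\theta^*_i} E\hat x$ (from (\ref{eq:CC1})) and replace $Ex^+_y$ with $B(x,y) + b_i \frac{\theta^*_1-\theta^*_{i+1}}{\theta^*_0-\theta^*_i} E\hat x$ (from (\ref{eq:BB1})). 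After this substitution the numerator of $E\hat x \star E\hat y$ becomes
\[
(\theta^*_{i-1}-\theta^*_i)C(x,y) + (\theta^*_{i+1}-\theta^*_i)B(x,y) + (\theta_2-\theta_0)E\hat y + \Big[ (\theta_1-\theta_2)\theta^*_i + c_i \tfrac{(\theta^*_1-\theta^*_{i-1})(\theta^*_{i-1}-\theta^*_i)}{\theta^*_0-\theta^*_i} + b_i \tfrac{(\theta^*_1-\theta^*_{i+1})(\theta^*_{i+1}-\theta^*_i)}{\theta^*_0-\theta^*_i} \Big] E\hat x .
\]
The key step is then to observe that, by Lemma \ref{cor:cibi}, the bracketed coefficient of $E\hat x$ equals $(\theta_1-\theta_2)\theta^*_i + (\theta_2-\theta_1)\theta^*_i + \theta_2-\theta_0 = \theta_2-\theta_0$, which exactly matches the desired symmetric coefficient $(\theta_2-\theta_0)(E\hat x + E\hat y)$ in (\ref{eq:main2}).

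I would handle the boundary cases separately but briefly. For $1 \leq i \leq d-1$ the computation above uses the first identity of Lemma \ref{cor:cibi} verbatim. For $i=d$ we have $x^+_y=0$ and $B(x,y)=0$ and $b_d=0$, so only the $Ex^-_y$ substitution is needed, and the second identity of Lemma \ref{cor:cibi} (the $i=d$ case) supplies exactly $c_d \frac{(\theta^*_1-\theta^*_{d-1})(\theta^*_{d-1}-\theta^*_d)}{\theta^*_0-\theta^*_d} = (\theta_2-\theta_1)\theta^*_d + \theta_2-\theta_0$, giving the same collapse. One should also note that the statement concerns distinct $x,y$, so $i \geq 1$ and the denominators $\theta^*_0 - \theta^*_i$ in (\ref{eq:CC1}), (\ref{eq:BB1}) are nonzero since the dual eigenvalues are mutually distinct; this justifies all the manipulations.

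There is no serious obstacle here — the result is essentially a bookkeeping rearrangement of Theorem \ref{thm:main}, with Lemma \ref{cor:cibi} doing the one piece of real algebraic work, and that lemma has already been established. The only thing to be careful about is the $i=d$ edge case (so that the symbol $\theta^*_{d+1}$ remains an indeterminate, exactly as flagged in the theorem statement) and making sure the coefficient of $E\hat x$ genuinely becomes symmetric in $x,y$; once Lemma \ref{cor:cibi} is invoked this symmetry is automatic, and it also serves as an internal consistency check, since the left-hand side $E\hat x \star E\hat y$ is manifestly symmetric while (\ref{eq:main2}) displays $C(x,y)$, $B(x,y)$, and $E\hat x + E\hat y$, all of which are symmetric by Corollary \ref{cor:CB}.
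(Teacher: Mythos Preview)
Your proposal is correct and follows essentially the same route as the paper: the paper's proof says to expand the right-hand side of (\ref{eq:main2}) via (\ref{eq:CC1}), (\ref{eq:BB1}) and compare with Theorem \ref{thm:main} using Lemma \ref{cor:cibi}, which is exactly your substitution run in the opposite direction. Your treatment is in fact more explicit than the paper's, particularly in separating the $i=d$ boundary case and noting that $\theta^*_0\neq\theta^*_i$ justifies the denominators.
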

\begin{proof} To verify (\ref{eq:main2}), expand the right-hand side using
(\ref{eq:CC1}), (\ref{eq:BB1}) and evaluate the result using
Theorem \ref{thm:main} along with
Lemma \ref{cor:cibi}.
\end{proof}
\noindent Referring to Theorem \ref{thm:main2},  the formula for $E \hat x \star E \hat y$ can be simplified if $i\in \lbrace 1,d\rbrace$.  This simplification is discussed next.

\begin{corollary} \label{cor:1D} 
Assume that $\Gamma$ is  $Q$-polynomial with respect to $E$. Then {\rm (i), (ii)} hold below.
\begin{enumerate}
\item[\rm (i)] For  $x,y \in X$ at distance $\partial(x,y)=1$,
\begin{align*}
E\hat x \star E \hat y = 
\frac{ (\theta^*_2-\theta^*_1) B(x,y) + (\theta_2-\theta_0+\theta^*_0 - \theta^*_1)(E\hat x + E \hat y)}{\vert X \vert (\theta_1-\theta_2)}.
\end{align*}
\item[\rm (ii)] For $x,y \in X$ at distance $\partial(x,y)=d$,
\begin{align*}
 E{\hat x}\star E{\hat y} &=
 \frac{ (\theta^*_{d-1}-\theta^*_d) C(x,y) +(\theta_2-\theta_0)(E\hat x + E{\hat y})}
  {\vert X \vert (\theta_1-\theta_2)}.
\end{align*}
\end{enumerate}
\end{corollary}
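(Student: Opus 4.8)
The plan is to derive Corollary \ref{cor:1D} directly from Theorem \ref{thm:main2} by specializing $i$ to the two extreme values $1$ and $d$, using the boundary clarifications already established for $C(x,y)$ and $B(x,y)$.

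For part (i), I would set $i=1$ in equation (\ref{eq:main2}). The coefficient of $C(x,y)$ becomes $\theta^*_0 - \theta^*_1$, and by the clarification following Corollary \ref{cor:CB} we have $C(x,y) = E\hat x + E\hat y$ when $i=1$. Hence the $C(x,y)$ term contributes $(\theta^*_0 - \theta^*_1)(E\hat x + E\hat y)$ to the numerator, which combines with the existing $(\theta_2 - \theta_0)(E\hat x + E\hat y)$ term to give $(\theta_2 - \theta_0 + \theta^*_0 - \theta^*_1)(E\hat x + E\hat y)$. The $B(x,y)$ term has coefficient $\theta^*_2 - \theta^*_1$ and is left as is, since $b_1 \neq 0$ means $B(x,y)$ does not degenerate. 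This yields exactly the claimed formula.

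For part (ii), I would set $i=d$ in equation (\ref{eq:main2}). The coefficient of $B(x,y)$ is $\theta^*_{d+1} - \theta^*_d$, but $B(x,y) = 0$ when $i = d$ by the clarification following Corollary \ref{cor:CB} (since $b_d = 0$), so this term vanishes and the indeterminate $\theta^*_{d+1}$ drops out harmlessly. The coefficient of $C(x,y)$ is $\theta^*_{d-1} - \theta^*_d$, and the $(\theta_2 - \theta_0)(E\hat x + E\hat y)$ term remains, giving the stated expression. One should note that the case $d = 1$ is not really separate here, as parts (i) and (ii) coincide and are consistent.

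I do not anticipate any serious obstacle; this corollary is purely a bookkeeping specialization of Theorem \ref{thm:main2}. The only point requiring mild care is confirming that the degenerate-boundary conventions for $C(x,y)$ and $B(x,y)$ are the ones recorded after Corollary \ref{cor:CB}, so that substituting $i \in \{1, d\}$ is legitimate and the formally appearing indeterminate $\theta^*_{d+1}$ does not cause trouble — which it does not, since its coefficient multiplies the zero vector.

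\begin{proof}
(i) Set $i=1$ in (\ref{eq:main2}). By the clarification following Corollary \ref{cor:CB} we have $C(x,y) = E\hat x + E\hat y$, so the term $(\theta^*_{i-1}-\theta^*_i)C(x,y)$ becomes $(\theta^*_0 - \theta^*_1)(E\hat x + E\hat y)$. Combining this with the term $(\theta_2-\theta_0)(E\hat x + E\hat y)$ in the numerator of (\ref{eq:main2}) yields the result.
\\
\noindent (ii) Set $i=d$ in (\ref{eq:main2}). By the clarification following Corollary \ref{cor:CB} we have $B(x,y) = 0$, so the term involving $B(x,y)$ (and hence the indeterminate $\theta^*_{d+1}$) drops out, and the result follows.
\end{proof}
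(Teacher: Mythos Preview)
Your proof is correct and matches the paper's own argument essentially verbatim: set $i=1$ in (\ref{eq:main2}) and use $C(x,y)=E\hat x+E\hat y$, then set $i=d$ and use $B(x,y)=0$. The aside about $d=1$ is unnecessary since the paper assumes $d\geq 2$ throughout, but otherwise there is nothing to add.
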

\begin{proof}  (i) Set $i=1$ in (\ref{eq:main2}) and use $C(x,y)= E \hat x + E \hat y$.
\\
\noindent (iii) Set $i=d$ in (\ref{eq:main2}) and use $B(x,y)=0$.
\end{proof}

\section{Remarks}

\noindent 
 The algebraic structure of a $Q$-polynomial distance-regular graph can be described using the concept of a Leonard system \cite[Definition~1.4]{ter2};
 this concept was motivated by a theorem of D.~A.~Leonard  \cite[p.~260]{banIto}, \cite{leonard}.
We refer the reader to \cite{ter2, LSnotes} for the standard notation and basic results about Leonard systems.
The equations below are routinely obtained using the formulas in \cite[Sections~19,~20]{LSnotes}.
Let $\Phi$ denote  any Leonard system with diameter $d\geq 2$. For $\Phi$ we have
\begin{align*}
a^*_1 = \frac{(\theta_1-a_0)\theta^*_1-(\theta_2-a_0)\theta^*_0+(\theta_2-\theta_0) c^*_1}{\theta_1-\theta_2}.
\end{align*}
\noindent If we set $a^*_1=q^1_{1,1}$ and $a_0=0$ and $c^*_1=1$  then we recover the formula in
Corollary
\ref{cor:q111}.
\medskip

\noindent For $\Phi$ we also have
\begin{align*}
\frac{c_1 - a_0 + \theta_1}{\theta_0-\theta_2} =
\frac{c^*_1 - a^*_0 + \theta^*_1}{\theta^*_0-\theta^*_2}.
\end{align*}
\noindent If we set $c_1=1$, $a_0=0$ and $c^*_1=1$, $a^*_0=0$ then we recover 
(\ref{eq:th2}).
\medskip

\noindent For $\Phi$ we also have
\begin{align*} 
 c_i  \frac{ (\theta^*_1 - \theta^*_{i-1})(\theta^*_{i-1} - \theta^*_i)}{\theta^*_0-\theta^*_i}
 + b_i  \frac{ (\theta^*_1 - \theta^*_{i+1})(\theta^*_{i+1} - \theta^*_i)}{\theta^*_0-\theta^*_i}
= (\theta_2 - \theta_1) (\theta^*_i -a^*_0)+ (\theta_2 - \theta_0)c^*_1
\end{align*}
for $1 \leq i \leq d-1$ and
 \begin{align*}
 c_d \frac{ (\theta^*_1 - \theta^*_{d-1})(\theta^*_{d-1} - \theta^*_d)}{\theta^*_0-\theta^*_d}
= (\theta_2 - \theta_1) (\theta^*_d-a^*_0) +(\theta_2 - \theta_0)c^*_1.
\end{align*}
\noindent If we set $a^*_0=0$ and $c^*_1=1$ then we recover the formulas in Lemma \ref{cor:cibi}.
\medskip

\section{Acknowledgement} The author thanks Jia Huang and Kazumasa Nomura for giving this paper a close
reading and offering valuable comments.

\bigskip

\noindent Paul Terwilliger \hfil\break
\noindent Department of Mathematics \hfil\break
\noindent University of Wisconsin \hfil\break
\noindent 480 Lincoln Drive \hfil\break
\noindent Madison, WI 53706-1388 USA \hfil\break
\noindent email: {\tt terwilli@math.wisc.edu }\hfil\break

\end{document}